
\documentclass[10pt]{amsart}
\usepackage{enumerate,amsmath,amssymb,latexsym,
amsfonts, amsthm, amscd}


\setlength{\textwidth}{14.5cm}
\setlength{\textheight}{22cm}

\hoffset=-2.0cm
\voffset=-.8cm


\theoremstyle{plain}

\newtheorem{theorem}{Theorem}[section]

\numberwithin{equation}{section}

\newcommand{\ra}{\rightarrow}

\addtocounter{section}{-1}


\begin{document}

\title {Automorphisms of Liouville Structures}

\date{}

\author[P.L. Robinson]{P.L. Robinson}

\address{Department of Mathematics \\ University of Florida \\ Gainesville FL 32611  USA }

\email[]{paulr@ufl.edu}

\subjclass{} \keywords{}

\begin{abstract}

By a {\it Liouville structure} on a symplectic manifold $(M, \omega)$ we mean a choice of symplectic potential: that is, a choice of one-form $\theta$ on $M$ such that ${\rm d} \theta = \omega$. We determine precisely all the automorphisms of a Liouville structure in case $(M, \omega)$ is a symplectic vector space and $\theta$ differs from its canonical symplectic potential by the differential of a homogeneous monomial.  

\end{abstract}

\maketitle

\bigbreak

\section{Introduction}

A {\it Liouville form} on the smooth manifold $M$ is a one-form $\theta \in \Omega^1 (M)$ of which the exterior derivative $\omega : = {\rm d} \theta$ is nonsingular at each point; thus, $(M, \omega)$ is an exact symplectic manifold for which $\theta$ is a preferred symplectic potential. The corresponding {\it Liouville field} on $M$ is the unique vector field $\zeta \in {\rm Vec} (M)$ with contraction $\zeta \lrcorner \; \omega = \theta$; the `magic' Cartan formula shows that $L_{\zeta} \omega = \omega$ and the corresponding (Liouville) flow $\phi$ of $\zeta$ satisfies $\phi_t^* \omega = e^t \omega$. We may (indeed, shall) regard the symplectic manifold $(M, \omega)$ as fixed, in which case the Liouville structure is determined by the Liouville field $\zeta$ and the Liouville form $\theta = \zeta \lrcorner \; \omega$ equally. By an automorphism of $(M, \theta)$ we mean a diffeomorphism $g : M \ra M$ such that $g^* \theta = \theta$; the group of all such automorphisms will be denoted by ${\rm Aut} (M, \theta)$. Naturally, automorphisms of $(M, \theta)$ preserve not only $\theta$ and $\omega$ but also $\zeta$ and its flow. 
  
\medbreak

As a special case, we consider the symplectic manifold $(M, \omega)$ that arises from a symplectic vector space $(V, \Omega)$: thus, $M$ is $V$ as a manifold, so that $V$ is canonically isomorphic to each of its tangent spaces $T_zV$ via $V \ra T_z V : v \mapsto v_z$ with $v_z \psi : = \psi '_z (v) = \frac{\rm d}{{\rm d} t}\psi (z + t v)|_{t = 0}$ for each smooth real function $\psi$ on $V$; also, $\omega$ is $\Omega$ transported via these canonical isomorphisms, so that $\omega_z (x_z, y_z) = \Omega(x, y)$ whenever $x, y, z \in V$. We denote by ${\rm Sp} (V, \Omega)$ the linear symplectic group comprising all linear automorphisms $g$ of $V$ such that $\Omega (g x, g y) = \Omega (x, y)$ for all $x, y \in V$ and by ${\rm Sp} (V, \omega)$ the symplectomorphism group comprising all diffeomorphisms $g$ of $V$ such that $g^* \omega = \omega$; of course, ${\rm Sp} (V, \Omega)$ is a subgroup of ${\rm Sp} (V, \omega)$. 

\medbreak 

In this special case, there exists a canonical Liouville structure. 

\begin{theorem} \label{theta}
$(V, \omega)$ carries a unique symplectic potential $\theta^0$ that is invariant under the linear symplectic group ${\rm Sp} (V, \Omega)$: explicitly, if $z \in V$ and $v \in V$ then
	\[\theta^0_z (v_z) = \frac{1}{2} \Omega (z, v). 
\] 
\end{theorem}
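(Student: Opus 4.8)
The plan is to treat existence (together with invariance) and uniqueness separately, taking the displayed formula as the \emph{definition} of $\theta^0$ for the first part.

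For existence, I would set $\theta^0_z(v_z) := \tfrac12\,\Omega(z,v)$; being linear in each of $z$ and $v$, this is a genuine smooth one-form on $V$. Its ${\rm Sp}(V,\Omega)$-invariance is then immediate once one observes that, under the canonical identifications $V\cong T_zV$, the derivative at $z$ of a \emph{linear} map $g$ carries $v_z$ to $(gv)_{gz}$: for $g\in{\rm Sp}(V,\Omega)$ we get $(g^{*}\theta^0)_z(v_z)=\theta^0_{gz}\big((gv)_{gz}\big)=\tfrac12\Omega(gz,gv)=\tfrac12\Omega(z,v)=\theta^0_z(v_z)$. The one real computation is $\mathrm d\theta^0=\omega$, which I would check by feeding the constant vector fields $x_{(\cdot)}$ and $y_{(\cdot)}$ into $\mathrm d\alpha(X,Y)=X(\alpha Y)-Y(\alpha X)-\alpha([X,Y])$: these fields commute, $\alpha Y$ is the function $z\mapsto\tfrac12\Omega(z,y)$ whose directional derivative along $x$ is $\tfrac12\Omega(x,y)$, symmetrically for $\alpha X$, so $(\mathrm d\theta^0)_z(x_z,y_z)=\tfrac12\Omega(x,y)-\tfrac12\Omega(y,x)=\Omega(x,y)=\omega_z(x_z,y_z)$. (Equivalently, in a symplectic basis $\theta^0=\tfrac12\sum_i(q^i\,\mathrm dp_i-p_i\,\mathrm dq^i)$, which differentiates to $\sum_i\mathrm dq^i\wedge\mathrm dp_i=\omega$.)

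For uniqueness, let $\theta$ be any ${\rm Sp}(V,\Omega)$-invariant symplectic potential and set $\eta:=\theta-\theta^0$. Then $\mathrm d\eta=\omega-\omega=0$, so $\eta$ is a closed one-form on the contractible manifold $V$, hence $\eta=\mathrm df$ for some $f\in C^{\infty}(V)$. Each $g\in{\rm Sp}(V,\Omega)$ is linear and so fixes the origin; $g^{*}\eta=\eta$ forces $\mathrm d(f\circ g-f)=0$, and as $V$ is connected the function $f\circ g-f$ equals the constant $f(g\cdot 0)-f(0)=0$. Thus $f$ is ${\rm Sp}(V,\Omega)$-invariant, and since ${\rm Sp}(V,\Omega)$ acts transitively on $V\setminus\{0\}$ (extend a nonzero vector to a symplectic basis and send symplectic bases to symplectic bases), $f$ is constant on $V\setminus\{0\}$ and hence on $V$ by continuity; therefore $\eta=\mathrm df=0$, i.e.\ $\theta=\theta^0$.

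I do not expect any deep obstacle here: the points that repay care are the sign bookkeeping in $\mathrm d\theta^0=\omega$ under the identification $V\cong T_zV$, and the appeal to transitivity of the linear symplectic group on the punctured space (with $\dim V=0$ a vacuous case). The exterior-derivative computation is the only step I would call an obstacle, and a merely clerical one.
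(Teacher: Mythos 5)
Your proposal is correct and follows essentially the same route as the paper: take the displayed formula as the definition, verify invariance and $\mathrm d\theta^0=\omega$ directly (the paper leaves this as an ``elementary exercise,'' which your constant-vector-field computation carries out), and obtain uniqueness by writing the difference of two potentials as $\mathrm df$, using that each $g\in{\rm Sp}(V,\Omega)$ fixes the origin and that ${\rm Sp}(V,\Omega)$ acts transitively on $V\setminus\{0\}$ to conclude $f$ is constant. No gaps.
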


\begin{proof} 
Verification that $\theta^0$ so defined is ${\rm Sp} (V, \Omega)$-invariant and satisfies ${\rm d} \theta^0 = \omega$ is an elementary exercise. An arbitrary symplectic potential $\theta$ for $\omega$ differs from $\theta^0$ by a form that is closed and therefore exact: say $\theta = \theta^0 + {\rm d}\psi$ for some $\psi: V \ra \mathbb{R}$. Each $g \in {\rm Sp} (V, \Omega)$ preserves ${\rm d} \psi = \theta - \theta^0$ so that ${\rm d} (\psi \circ g) = {\rm d} (g^* \psi) = g^* ({\rm d} \psi) = {\rm d} \psi$ and therefore $\psi \circ g - \psi$ is constant, with value $0$ as $g$ fixes the origin; the action of ${\rm Sp} (V, \Omega)$ on $V \setminus \{0\}$ being transitive, it follows that $\psi$ is constant and therefore that $\theta = \theta^0$. 

\end{proof}

In what follows, we shall routinely suppress the superscript $0$ and write simply $\theta$ for this canonical Liouville form. The corresponding Liouville field $\zeta$ is precisely one-half the Euler field: if $z \in V$ then its value at $z$ is $\frac{1}{2}z$ made tangent at $z$ so that $\zeta_z = (\frac{1}{2}z)_z$; its (Liouville) flow is given by $\phi_t (z) = e^{\frac{1}{2} t}z$. In terms of symplectic coordinates $(p_1, \dots , p_m, q_1, \dots , q_m)$ this canonical Liouville structure is familiar: 
	\[\theta = \frac{1}{2} \sum_{i = 1}^{m} (p_i {\rm d} q_i - q_i {\rm d} p_i),
\]
	\[\zeta = \frac{1}{2} \sum_{i = 1}^{m} (p_i \frac{\partial}{\partial p_i} + q_i \frac{\partial}{\partial q_i}). 
\]

\medbreak 

This canonical Liouville form is better than invariant under ${\rm Sp} (V, \Omega)$: its invariance under a smooth map $g : V \ra V$ (not assumed to be a diffeomorphism) forces such $g$ to lie in ${\rm Sp} (V, \Omega)$. 

\begin{theorem} \label{Omega}
Let $g: V \ra V$ be a smooth map. If $g^* \theta = \theta$ then $g \in {\rm Sp} (V, \Omega)$. 
\end{theorem}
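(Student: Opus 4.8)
The plan is to distill two pointwise linear-algebraic identities out of the hypothesis $g^*\theta = \theta$ and then integrate them. Since $\omega = {\rm d}\theta$ and pullback commutes with ${\rm d}$, the hypothesis immediately gives $g^*\omega = \omega$; evaluating this at a point $z$ through the canonical identification $V \cong T_zV$ yields $\Omega(g'_z x, g'_z y) = \Omega(x,y)$ for all $x,y \in V$, so $g'_z \in {\rm Sp}(V,\Omega)$ for every $z$ --- in particular each $g'_z$ is invertible, hence $g$ is a local diffeomorphism. Evaluating $g^*\theta = \theta$ itself at $z$ gives $\tfrac12\Omega(g(z), g'_z v) = \tfrac12\Omega(z, v)$ for all $v$; subtracting from this the identity $\Omega(g'_z z, g'_z v) = \Omega(z, v)$ (the previous relation with $x = z$) and invoking nondegeneracy of $\Omega$ together with surjectivity of $g'_z$, I obtain $g(z) = g'_z(z)$ for every $z \in V$. (This last identity just says that $g$ is compatible with the Liouville field $\zeta_z = (\tfrac12 z)_z$.)

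Next I would integrate $g(z) = g'_z(z)$ along rays from the origin. Fixing $z$ and setting $c(t) = g(e^{t/2}z)$, the chain rule gives $c'(t) = \tfrac12\, g'_{w}(w)$ with $w = e^{t/2}z$, so $c'(t) = \tfrac12\, g(w) = \tfrac12\, c(t)$; since $c(0) = g(z)$, uniqueness of solutions forces $c(t) = e^{t/2} g(z)$. Equivalently, $g(\lambda z) = \lambda\, g(z)$ for all $\lambda > 0$ and all $z \in V$, i.e. $g$ is positively homogeneous of degree one --- which is of course just the statement that $g$ commutes with the Liouville flow $\phi_t(z) = e^{t/2}z$.

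It remains to upgrade positive homogeneity to linearity and conclude. Homogeneity forces $g(0) = 0$, and then $g'_0(z) = \lim_{t \to 0^+} t^{-1} g(tz) = g(z)$, the one-sided limit computing the full derivative because $g$ is differentiable at $0$; hence $g = g'_0$ is a linear map, and by the first step $g'_0 \in {\rm Sp}(V,\Omega)$, so $g \in {\rm Sp}(V,\Omega)$ as claimed. I expect the one genuinely non-formal point to be precisely this last upgrade: positive homogeneity of degree one does not by itself imply linearity (any norm is a counterexample), so it is essential to use differentiability of $g$ at the origin, and this is where the smoothness hypothesis is indispensable; the rest is bookkeeping with the canonical identifications $V \cong T_zV$.
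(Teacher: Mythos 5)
Your proof is correct and follows essentially the same route as the paper: extract $g'_z \in {\rm Sp}(V,\Omega)$ from $g^*\omega=\omega$, derive the pointwise identity $g(z)=g'_z(z)$ from $g^*\theta=\theta$, deduce positive homogeneity and $g(0)=0$, and conclude $g=g'_0$ via differentiability at the origin. The only difference is that you explicitly integrate the ODE along rays to justify the homogeneity step, which the paper asserts without proof.
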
 

\begin{proof} 
The smooth map $g$ preserves the symplectic form, for $g^* \omega = g^* {\rm d} \theta = {\rm d} g^* \theta = {\rm d} \theta = \omega$; accordingly, each derivative $g'_z : V \ra V$ lies in ${\rm Sp} (V, \Omega)$. If also $v \in V$ then 
	\[\theta_z (v_z) = (g^* \theta)_z (v_z) = \theta_{g(z)} (g_* (v_z)) = \theta_{g(z)} ((g'_z v)_{g(z)}) 
\]
thus (after doubling) 
	\[\Omega (z, v) = \Omega (g(z), g'_z v) = \Omega ((g'_z)^{-1} g(z), v)
\]
so nonsingularity of $\Omega$ yields
	\[z = (g'_z)^{-1} g(z) 
\]
or 
	\[g'_z (z) = g(z). 
\]
This implies that $g$ is homogeneous of degree one, preserving the Liouville flow: 
	\[g(e^{\frac{1}{2} t}z) = e^{\frac{1}{2} t}g(z) 
\]
from which (as $t \ra - \infty$) we deduce that $g(0) = 0$. Finally, as $g$ is differentiable at $0$, 
	\[g'_0 (z) = \lim_{s \ra 0} \frac{g(s z) - g(0)}{s} = g(z)
\]
whence $g = g'_0 \in {\rm Sp} (V, \Omega)$. 
\end{proof} 

We may extract from Theorem \ref{theta} and Theorem \ref{Omega} the following complete description of the automorphism group of the canonical $(V, \theta)$. 

\begin{theorem} \label{aut}

\fbox{${\rm Aut} (V, \theta) = {\rm Sp} (V, \Omega)$}

\end{theorem}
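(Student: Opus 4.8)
The plan is to read off this identity as an immediate consequence of the two preceding theorems, by establishing the two inclusions separately. There is essentially no computational content left; the work has already been done.

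First I would argue ${\rm Sp}(V, \Omega) \subseteq {\rm Aut}(V, \theta)$. Every $g \in {\rm Sp}(V, \Omega)$ is a linear automorphism of $V$, hence in particular a diffeomorphism of the manifold $V$; and Theorem \ref{theta} asserts precisely that $g^* \theta = \theta$ for such $g$, since $\theta = \theta^0$ is the ${\rm Sp}(V, \Omega)$-invariant symplectic potential. Thus $g$ satisfies the defining condition for membership in ${\rm Aut}(V, \theta)$.

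Next I would argue the reverse inclusion ${\rm Aut}(V, \theta) \subseteq {\rm Sp}(V, \Omega)$. Let $g \in {\rm Aut}(V, \theta)$; then $g$ is in particular a smooth map $V \to V$ with $g^* \theta = \theta$, so Theorem \ref{Omega} applies verbatim and yields $g \in {\rm Sp}(V, \Omega)$. (Note that Theorem \ref{Omega} does not even require $g$ to be a diffeomorphism, so this direction is slightly stronger than needed.)

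Combining the two inclusions gives the boxed equality. I do not anticipate any obstacle: the only subtlety worth a remark is that ${\rm Aut}(V, \theta)$ was defined using diffeomorphisms whereas Theorem \ref{Omega} is stated for arbitrary smooth maps, but this discrepancy only makes the conclusion easier, and in fact shows incidentally that every $\theta$-preserving smooth self-map of $V$ is automatically a linear symplectic automorphism.
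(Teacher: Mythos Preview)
Your proposal is correct and matches the paper's approach exactly: the paper presents Theorem \ref{aut} as an immediate extraction from Theorem \ref{theta} and Theorem \ref{Omega}, with no further argument given. Your observation that Theorem \ref{Omega} actually yields the slightly stronger statement that any $\theta$-preserving smooth self-map is automatically in ${\rm Sp}(V,\Omega)$ is also apt.
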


  Our aim in this paper is to determine the automorphism group ${\rm Aut} (V, \theta + {\rm d}\psi)$ for a class of elementary functions $\psi$; recall that each Liouville form for $(V, \omega)$ has the form $\theta + {\rm d}\psi$ for some $\psi : V \ra \mathbb{R}$. To be explicit, fix $a \in V$ and fix a positive integer $n$: a homogeneous monomial $\psi: V \ra \mathbb{R}$ is then defined by the rule 
	\[\psi (z) = \frac{1}{2 n} \Omega (a, z)^n 
\]
for all $z \in V$. We shall precisely determine ${\rm Aut} (V, \theta + {\rm d}\psi)$ for each point $a$ and each degree $n$; the cases $n =1$, $n = 2$ and $n \geqslant 3$ will be handled separately. We round off our account with a complete determination of the isomorphisms between these Liouville structures. 

\medbreak

\section{Linear Monomials}

Fix $a \in V$ and define $\psi^a : V \ra \mathbb{R}$ by 
	\[\psi^a (z) = \frac{1}{2} \Omega (a, z). 
\]
The exterior derivative of $\psi^a$ is given by 
	\[{\rm d} \psi^a_z (v_z) = \frac{1}{2} \Omega (a, v)
\]
whence the Liouville form $\theta^a = \theta + {\rm d} \psi^a$ is given by 
	\[\theta^a_z (v_z) = \frac{1}{2} \Omega (z + a, v)
\]
and the corresponding Liouville field $\zeta^a$ by 
	\[\zeta^a_z = [\frac{1}{2} (z + a)]_z 
\]
with Liouville flow 
	\[\phi^a_t (z) = e^{\frac{1}{2} t} \{z + a\} - a. 
\]

\medbreak 

Now, let $g \in {\rm Aut} (V, \theta^a)$. As $g$ also preserves $\zeta^a$ and its Liouville flow, it follows that 
	\[g(e^{\frac{1}{2} t} \{z + a\} - a) = e^{\frac{1}{2} t} \{g(z) + a\} - a
\]
or with $s = e^{\frac{1}{2} t}$
	\[g(s(z + a) - a) = s(g(z) + a) - a. 
\]
Let $s \ra 0$ to deduce that $g$ fixes $-a$: 
	\[g(-a) = -a.
\]
Rearrange to obtain 
	\[g(z) + a = \frac{1}{s} \{ g(s(z + a) - a) - g(-a) \}
\]
and let $s \ra 0$ once more to deduce that 
	\[g(z) + a = g'_{-a} (z + a)  
\]
where $g'_{-a} \in {\rm Sp} (V, \Omega)$ because $g$ preserves $\omega$. We conclude that each $g \in {\rm Aut} (V, \theta^a)$ has the form 
	\[g(z) = \gamma (z + a) - a
\]
for some $\gamma \in {\rm Sp} (V, \Omega)$; conversely, each $g: V \ra V$ having this form is readily verified to be an automorphism of $(V, \theta^a)$. We summarize these findings as follows. 

\begin{theorem} \label{linear}
If $a \in V$ and $\psi^a : V \ra \mathbb{R}: z \mapsto \frac{1}{2} \Omega (a, z)$ then ${\rm Aut} (V, \theta^a)$ comprises precisely all $g: V \ra V$ having the form 
	\[g(z) = \gamma (z + a) - a
\]
for some $\gamma \in {\rm Sp} (V, \Omega)$. 
\end{theorem}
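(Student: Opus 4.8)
The paragraph preceding the statement already carries out the substantive argument, so one route is simply to package it; let me record the alternative, slightly cleaner route I would take from scratch, which reduces everything to Theorem~\ref{aut} via a translation. Write $T_a : V \ra V$ for $z \mapsto z + a$. Because $T_a$ has derivative the identity at every point, $(T_a)_*(v_z) = v_{z+a}$, so $((T_a)^*\theta)_z(v_z) = \theta_{z+a}(v_{z+a}) = \tfrac12\Omega(z+a,v) = \theta^a_z(v_z)$; that is, $\theta^a = (T_a)^*\theta$.

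With that in hand, for any diffeomorphism $g$ of $V$ a short chase of pullbacks gives the equivalences $g^*\theta^a = \theta^a \iff (T_a \circ g)^*\theta = (T_a)^*\theta \iff (T_a \circ g \circ T_{-a})^*\theta = \theta$. Hence $g \mapsto T_a \circ g \circ T_{-a}$ carries ${\rm Aut}(V,\theta^a)$ bijectively onto ${\rm Aut}(V,\theta)$, and the target group is ${\rm Sp}(V,\Omega)$ by Theorem~\ref{aut}. Inverting, $g = T_{-a} \circ \gamma \circ T_a$ for a unique $\gamma \in {\rm Sp}(V,\Omega)$, i.e. $g(z) = \gamma(z+a) - a$; reading the chain of equivalences in the other direction shows conversely that every map of this form preserves $\theta^a$.

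If one prefers to stay self-contained, the direct argument (as above) runs: $g$ preserves $\omega = {\rm d}\theta^a$, so every $g'_z \in {\rm Sp}(V,\Omega)$; $g$ preserves $\zeta^a$ and hence its flow, giving $g(s(z+a)-a) = s(g(z)+a) - a$ for $s > 0$; letting $s \ra 0$ forces $g(-a) = -a$, and differentiating that relation at $s = 0$ produces $g(z) + a = g'_{-a}(z+a)$, so $\gamma := g'_{-a}$ works. I do not expect a real obstacle. The one spot that genuinely needs attention is bookkeeping with the pullbacks and the point-dependent identifications $V \cong T_zV$ — in particular the fact that translation acts trivially on those identifications, which is what makes $(T_a)^*\theta$ equal to $\theta^a$ and nothing messier — while the converse inclusion, in either route, reduces to the one-line identity $\Omega(\gamma(z+a), \gamma v) = \Omega(z+a, v)$.
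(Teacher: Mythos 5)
Your proposal is correct, and the route you lead with is genuinely different from the one the paper uses to prove this theorem. The paper argues dynamically: from $g^*\theta^a=\theta^a$ it deduces that $g$ intertwines the Liouville flows $\phi^a_t(z)=e^{t/2}(z+a)-a$, lets $s=e^{t/2}\to 0$ to find the fixed point $g(-a)=-a$, and then differentiates the intertwining relation at $s=0$ to obtain $g(z)+a=g'_{-a}(z+a)$ with $g'_{-a}\in{\rm Sp}(V,\Omega)$ --- exactly your ``self-contained'' fallback, which you state accurately (the only care needed is that $s$ ranges over $(0,\infty)$, so both limits are one-sided; this is harmless since $g$ is continuous and differentiable at $-a$). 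Your main route instead verifies $\theta^a=\tau_a^*\theta$ and conjugates by the translation, reducing everything to Theorem \ref{aut}; the pullback bookkeeping and the observation that translations act trivially on the identifications $V\cong T_zV$ are both right, and Theorem \ref{Omega} even spares you from having to assume $\tau_a\circ g\circ\tau_{-a}$ is a diffeomorphism. This is precisely the ``line used for Theorem \ref{higher}'' that the paper itself deploys in Section 4 to compute ${\rm Iso}(V^a,V^b)$ in the linear case, so it is sanctioned by the paper, just not at the point where Theorem \ref{linear} is proved. As for what each buys: the conjugation argument is shorter, delivers the converse inclusion for free, and yields the isomorphism classification simultaneously; the flow argument is self-contained and is the template that generalizes to the quadratic case, where no globally trivializing diffeomorphism onto $(V,\theta^0)$ exists.
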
 
\begin{flushright}
$\Box$
\end{flushright} 
\medbreak 

Thus, ${\rm Aut} (V, \theta^a)$ comprises precisely all the affine symplectic automorphisms of $(V, \Omega)$ that fix $-a$. 
 
\medbreak 

\section{Quadratic Monomials}

Fix $a \in V$ and define $\psi^a : V \ra \mathbb{R}$ by 
	\[\psi^a (z) = \frac{1}{4} \Omega (a, z)^2. 
\]
The exterior derivative of $\psi^a$ is given by 
	\[{\rm d} \psi^a_z (v_z) = \frac{1}{2} \Omega (a, z) \Omega (a, v)
\]
whence the Liouville form $\theta^a = \theta + {\rm d} \psi^a$ is given by 
	\[\theta^a_z (v_z) = \frac{1}{2} \Omega (z + \Omega (a, z) a, v)
\]
and the corresponding Liouville field $\zeta^a$ by 
	\[\zeta^a_z = [\frac{1}{2} (z + \Omega (a, z)a)]_z 
\]
with Liouville flow 
	\[\phi^a_t (z) = e^{\frac{1}{2} t} \{ z + \frac{1}{2} t \Omega (a, z) a \}. 
\]

\medbreak

Now, let $g \in {\rm Aut} (V, \theta^a)$. As $g$ preserves the time $2 t$ Liouville flow, 
	\[g(e^t \{ z + t \Omega (a, z) a\}) = e^t \{ g(z) + t \Omega (a, g(z)) a \} 
\]
whence letting $t \ra - \infty$ shows that $g$ fixes $0$: 
	\[g(0) = 0. 
\]
Differentiate along the flow and divide by $e^t$ to obtain 
	\[g'_{z_t} (z + (1 +t) \Omega (a, z) a) = g(z) + (1 + t) \Omega (a, g(z)) a
\]
or
	\[g(z) - g'_{z_t} (z) = (1 + t) \{ \Omega (a, z) g'_{z_t} (a) - \Omega (a, g(z)) a\}   
\]
where $z_t = e^t \{ z + t \Omega (a, z) a\}$. Let $t \ra - \infty$ and note that $z_t \ra 0$ so $g'_{z_t} \ra g'_0$; now 
	\[g(z) - g'_{z_t} (z) \ra g(z) - g'_0 (z)
\]
and 
	\[\Omega (a, z) g'_{z_t} (a) - \Omega (a, g(z)) a \ra \Omega (a, z) g'_0 (a) - \Omega (a, g(z)) a 
\]
whence the presence of $1 + t$ in the last equation above forces
	\[\Omega (a, z) g'_0 (a) = \Omega (a, g(z)) a
\]
and therefore
	\[g(z) - g'_{z_t} (z) = (1 + t) \Omega (a, z) \{ g'_{z_t} (z) - g'_0 (z) \}. 
\]
The derivative $g'$ is locally Lipschitz and $z_t = e^t \{ z + t \Omega (a, z) a\}$ converges to $0$ exponentially fast, so $g'_{z_t} (z)- g'_0 (z)$ converges to $0$ exponentially fast, dominating the factor $1 + t$; thus 
	\[g(z) = g'_0 (z) 
\]
and so $g = g'_0 \in {\rm Sp} (V, \Omega)$. Further, $\Omega (a, z) g'_0 (a) = \Omega (a, g(z)) a$ shows that $g(a) = g'_0 (a) = \lambda a$ for some real $\lambda$; now 
	\[\Omega (a, z) \lambda = \Omega (a, g(z)) = \Omega (g^{-1} (a), z) = \Omega (\lambda^{-1} a, z) = \lambda^{-1} \Omega (a, z)
\]
thus $\lambda^2 = 1$ and so $g(a) = \pm a$. Conversely, it is readily verified that each element of ${\rm Sp} (V, \Omega)$ sending $a$ to $\pm a$ is an automorphism of $(V, \theta^a)$. The following is a summary of our findings. 

\begin{theorem} \label{quadratic}
If $a \in V$ and $\psi^a : V \ra \mathbb{R}: z \mapsto \frac{1}{4} \Omega (a, z)^2$ then
	\[{\rm Aut} (V, \theta^a) = \{ g \in {\rm Sp} (V, \Omega) : g(a) = \pm a \}. 
\]
\end{theorem}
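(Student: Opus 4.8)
\medbreak

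The plan is to follow the template of Section 2: an automorphism $g$ of $(V, \theta^a)$ preserves the Liouville field $\zeta^a$, hence its flow, and this rigidity combined with a pair of limiting arguments will pin down $g$. First I would note that $g^* \theta^a = \theta^a$ forces $g^* \omega = \omega$ (since $\omega = {\rm d}\theta^a$), so $g$ is a symplectomorphism and every derivative $g'_z$ lies in ${\rm Sp}(V, \Omega)$; in particular $g$ is smooth with smooth, hence locally Lipschitz, derivative $g'$. Writing the doubled flow as $z_t := \phi^a_{2t}(z) = e^t\{z + t\,\Omega(a,z)\,a\}$, the intertwining relation $g \circ \phi^a_{2t} = \phi^a_{2t} \circ g$ reads $g(z_t) = e^t\{g(z) + t\,\Omega(a, g(z))\,a\}$ for all $z \in V$ and $t \in \mathbb{R}$.

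I would then extract consequences by letting $t \to -\infty$. Since $z_t \to 0$, the relation immediately gives $g(0) = 0$. Differentiating in $t$ and dividing by $e^t$ yields $g'_{z_t}\big(z + (1+t)\,\Omega(a,z)\,a\big) = g(z) + (1+t)\,\Omega(a, g(z))\,a$; expanding the left side by linearity of $g'_{z_t}$ and rearranging produces
	\[g(z) - g'_{z_t}(z) = (1+t)\big\{ \Omega(a,z)\, g'_{z_t}(a) - \Omega(a, g(z))\, a\big\}.\]
As $t \to -\infty$ the left side converges to the bounded quantity $g(z) - g'_0(z)$ while the braced term converges to $\Omega(a,z)\, g'_0(a) - \Omega(a, g(z))\, a$; the unbounded factor $1+t$ then forces the limiting braced term to vanish, i.e.
	\[\Omega(a,z)\, g'_0(a) = \Omega(a, g(z))\, a \qquad (z \in V).\]
Substituting this back simplifies the displayed identity to $g(z) - g'_{z_t}(z) = (1+t)\,\Omega(a,z)\,\{g'_{z_t}(a) - g'_0(a)\}$.

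The only genuinely analytic step — and the one I expect to be the main obstacle — is disposing of this residual identity, in which a linearly growing factor $1+t$ multiplies a quantity that tends to $0$. Here local Lipschitz continuity of $g'$ is decisive: since $|z_t| = O\big((1+|t|)\,e^t\big)$ as $t \to -\infty$, we get $|g'_{z_t}(a) - g'_0(a)| = O\big((1+|t|)\,e^t\big)$, so the whole right side is $O\big((1+|t|)^2 e^t\big) \to 0$. Hence $g(z) = g'_0(z)$ for every $z$; that is, $g = g'_0$ is linear and lies in ${\rm Sp}(V, \Omega)$.

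It remains to identify which linear symplectic maps occur. If $a = 0$ the theorem is trivial, since then $\theta^a = \theta$ and ${\rm Aut}(V,\theta) = {\rm Sp}(V,\Omega)$ by Theorem \ref{aut}, which equals $\{g \in {\rm Sp}(V,\Omega) : g(0) = 0\}$. Assume $a \neq 0$. Choosing $z$ with $\Omega(a,z) \neq 0$ in the relation $\Omega(a,z)\, g(a) = \Omega(a, g(z))\, a$ shows $g(a) = \lambda a$ for some scalar $\lambda$, which is nonzero because $g$ is injective and $a \neq 0$; then $g^{-1}(a) = \lambda^{-1}a$, so $\Omega(a, g(z)) = \Omega(g^{-1}(a), z) = \lambda^{-1}\Omega(a,z)$, while the same relation gives $\Omega(a, g(z)) = \lambda\,\Omega(a,z)$. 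Comparing these for $z$ with $\Omega(a,z) \neq 0$ forces $\lambda^2 = 1$, whence $g(a) = \pm a$. Conversely, if $g \in {\rm Sp}(V, \Omega)$ with $g(a) = \pm a$, then $g^* \theta = \theta$ by Theorem \ref{aut}, while $\psi^a(g(z)) = \frac{1}{4}\Omega(a, g(z))^2 = \frac{1}{4}\Omega(g^{-1}(a), z)^2 = \frac{1}{4}\Omega(\pm a, z)^2 = \psi^a(z)$ gives $g^*{\rm d}\psi^a = {\rm d}(\psi^a \circ g) = {\rm d}\psi^a$; hence $g^*\theta^a = \theta^a$, completing the proof.
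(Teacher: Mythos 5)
Your proposal is correct and follows essentially the same route as the paper: preservation of the time-$2t$ Liouville flow, differentiation and division by $e^t$, the two-stage limit as $t \ra -\infty$ (first extracting $\Omega(a,z)\,g'_0(a) = \Omega(a,g(z))\,a$ from the unbounded factor $1+t$, then using local Lipschitz continuity of $g'$ to kill the residual term), and finally the $\lambda^2 = 1$ computation. The only differences are cosmetic — you treat $a = 0$ separately and write out the converse verification explicitly, and your residual identity correctly carries the argument $a$ where the paper's printed version has an apparent typo with $z$.
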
 
\begin{flushright}
$\Box$
\end{flushright} 
\medbreak 

Note that the sign ambiguity is to be expected, for $\psi^a = \psi^{-a}$ here. Our analysis of $\theta + {\rm d}\psi^a$ applies to $\theta - {\rm d}\psi^a$ in parallel fashion, yielding the same automorphisms. 

\medbreak 

\section{Higher-degree Monomials}

Let $n$ be a positive integer. Fix $a \in V$ and define $\psi^a : V \ra \mathbb{R}$ by 
	\[\psi^a (z) = \frac{1}{2(n + 2)} \Omega (a, z)^{n + 2}.
\]
The exterior derivative of $\psi^a$ is given by 
	\[{\rm d} \psi^a_z (v_z) = \frac{1}{2} \Omega (a, z)^{n + 1} \Omega (a, v)
\]
whence the Liouville form $\theta^a = \theta + {\rm d} \psi^a$ is given by 
	\[\theta^a_z (v_z) = \frac{1}{2} \Omega (z + \Omega (a, z)^{n + 1} a, v)
\]
and the corresponding Liouville field $\zeta^a$ by 
	\[\zeta^a_z = [\frac{1}{2} (z + \Omega (a, z)^{n + 1}a)]_z 
\]
with Liouville flow 
	\[\phi^a_t (z) = e^{\frac{1}{2} t} \{ z  - \frac{1}{n} \Omega (a, z)^{n + 1} a \} + \frac{1}{n} e^{\frac{1}{2} (n + 1)t} \Omega (a, z)^{n + 1} a. 
\]

\medbreak

Our approach to these higher-degree cases is different. We begin by introducing 
	\[f_a : V \ra V : z \mapsto z + \frac{1}{n} \Omega( a, z)^{n + 1}a. 
\]
This is a diffeomorphism: indeed, its inverse maps $z$ to $z - \frac{1}{n} \Omega( a, z)^{n + 1}a$. Further, we claim that $f_a$ pulls $\theta^a$ back to the canonical $\theta = \theta^0$: 
	\[f_a^* \theta^a = \theta. 
\]
To see this, note that 
	\[(f_a)'_z (v) = v + \frac{n + 1}{n} \Omega (a, z)^n \Omega (a, v) a
\]
whereupon  
	\[(f_a^* \theta^a)_z (v_z) = (\theta^a)_{f_a (z)} ((f_a)'_z (v)) = \frac{1}{2} \Omega (f_a (z), (f_a)'_z (v)) = \frac{1}{2} \Omega (z, v) = \theta_z (v_z)
\]
follows by substitution and cancellation. As a consequence, the map $f_a$ is a symplectomorphism: $f_a \in {\rm Sp} (V, \omega)$. 

\begin{theorem} \label{higher}
If $a \in V$ and $\psi^a : V \ra \mathbb{R}: z \mapsto \frac{1}{2(n + 2)} \Omega (a, z)^{n + 2}$ then 
	\[{\rm Aut} (V, \theta^a) = \{ f_a \circ \gamma \circ f_a^{-1} : \gamma \in {\rm Sp} (V, \Omega) \}. 
\]
\end{theorem}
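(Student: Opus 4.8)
The plan is to exploit the observation, established immediately before the statement, that $f_a$ is a symplectomorphism satisfying $f_a^* \theta^a = \theta$; in other words, $f_a$ is an \emph{isomorphism of Liouville structures} from $(V, \theta)$ onto $(V, \theta^a)$. Granting this, the theorem follows by transport of structure together with Theorem \ref{aut}.

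First I would record the elementary principle behind transport of structure: if $F : V \ra V$ is a diffeomorphism with $F^* \theta^a = \theta$, then for a diffeomorphism $g$ of $V$ one has $g^* \theta^a = \theta^a$ if and only if $(F^{-1} \circ g \circ F)^* \theta = \theta$. This is immediate from $(F^{-1} \circ g \circ F)^* = F^* \circ g^* \circ (F^{-1})^*$ together with the identities $F^* \theta^a = \theta$ and $(F^{-1})^* \theta = \theta^a$ — the latter because $(F^{-1})^* F^* \theta^a = (F \circ F^{-1})^* \theta^a = \theta^a$ — using that $F^*$ is injective. Thus conjugation by $F$ carries ${\rm Aut}(V, \theta)$ bijectively onto ${\rm Aut}(V, \theta^a)$.

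Applying this with $F = f_a$: given $g \in {\rm Aut}(V, \theta^a)$, the map $h := f_a^{-1} \circ g \circ f_a$ lies in ${\rm Aut}(V, \theta)$, which equals ${\rm Sp}(V, \Omega)$ by Theorem \ref{aut}; hence $h = \gamma$ for some $\gamma \in {\rm Sp}(V, \Omega)$ and $g = f_a \circ \gamma \circ f_a^{-1}$. Conversely, reading the equivalence in the other direction, for each $\gamma \in {\rm Sp}(V, \Omega) = {\rm Aut}(V, \theta)$ the map $f_a \circ \gamma \circ f_a^{-1}$ is a diffeomorphism satisfying $(f_a \circ \gamma \circ f_a^{-1})^* \theta^a = \theta^a$, so it belongs to ${\rm Aut}(V, \theta^a)$. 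This gives the asserted equality.

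I expect no genuine obstacle here: the substantive work — checking that $f_a$ is a diffeomorphism with inverse $z \mapsto z - \frac{1}{n} \Omega(a, z)^{n+1} a$ (legitimate because $\Omega(a, a) = 0$ forces $\Omega(a, f_a(z)) = \Omega(a, z)$) and that $f_a^* \theta^a = \theta$ — has already been carried out before the statement. The only point demanding a moment's attention is the bookkeeping with the direction of the pullbacks. It is worth noting the contrast with the cases $n = 1, 2$: no residual rigidity of the point $a$ survives, since conjugation by $f_a$ is entirely unconstrained, and accordingly the answer is the full conjugate subgroup $f_a \, {\rm Sp}(V, \Omega) \, f_a^{-1}$ of ${\rm Sp}(V, \omega)$.
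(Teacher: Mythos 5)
Your proposal is correct and is essentially the paper's own proof: both arguments rest on the previously established fact that $f_a^* \theta^a = \theta$, so that conjugation by $f_a$ identifies ${\rm Aut}(V, \theta^a)$ with ${\rm Aut}(V, \theta) = {\rm Sp}(V, \Omega)$ via Theorem \ref{aut}. You merely spell out the transport-of-structure bookkeeping in slightly more detail than the paper does.
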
 

\begin{proof} 
The polynomial $f_a$ carries us neatly back to basics: the self-map $g$ of $V$ pulls back $\theta^a$ to itself precisely when $f_a^{-1} \circ g \circ f_a$ pulls back $f_a^* \theta^a = \theta$ to itself precisely when $f_a^{-1} \circ g \circ f_a$ lies in ${\rm Sp} (V, \Omega)$ on account of Theorem \ref{aut}.  
\end{proof} 

It is perhaps worth remarking that this was not our original proof for Theorem \ref{higher}. We originally considered a cubic monomial, assuming $g$ to preserve the time $2 t$ Liouville flow: 
	\[g( e^t \{ z - \Omega (a, z)^2 a \} + e^{2t} \Omega (a, z)^2 a) = e^t \{ g(z) - \Omega (a, g(z))^2 a \} + e^{2t} \Omega (a, g(z))^2 a. 
\]
Application of $e^{-t} \frac{\rm d}{{\rm d} t}$ results in 
	\[g'_{z_t} (z - \Omega (a, z)^2 a + 2 e^t \Omega (a, z)^2 a) = g(z) - \Omega (a, g(z))^2 a + 2 e^t \Omega (a, g(z))^2 a
\]
where now $z_t = e^t \{ z - \Omega (a, z)^2 a \} + e^{2t} \Omega (a, z)^2 a$. Rearrangement of the limit as $t \ra - \infty$ yields 
	\[g(z) = g'_0 (z - \Omega (a, z)^2 a) + \Omega (a, g(z))^2 a
\]
from which the observation 
	\[\Omega (a, g(z)) = \Omega (a, g'_0 (z - \Omega (a, z)^2 a))
\]
produces 
\[g(z) = g'_0 (z - \Omega (a, z)^2 a) + \Omega (a, g'_0 (z - \Omega (a, z)^2 a))^2 a. 
\] 
Of course, this quartic $g$ is exactly the composite $f_a \circ g'_0 \circ f_a^{-1}$ in the present case. 

\medbreak 

Among the features that distinguish these higher degrees from the quadratic and the linear is the following. Recall that in Theorem \ref{linear} each automorphism of $(V, \theta^a)$ fixes the point $-a$; recall also that in Theorem \ref{quadratic} each automorphism of $(V, \theta^a)$ fixes the set $\{ a, -a \}$. By contrast, in Theorem \ref{higher} the group ${\rm Aut} (V, \theta^a)$ acts transitively on $V\setminus \{ 0 \}$ because ${\rm Sp} (V, \Omega)$ itself does so.  

\medbreak 

\section{Isomorphisms} 

The determination of all isomorphisms between these Liouville structures can be effected by essentially the same arguments as those employed for the automorphisms; indeed, the subject could have been presented from the viewpoint of isomorphisms. When $a, b \in V$ we shall write simply ${\rm Iso} (V^a, V^b)$ for the set of all isomorphisms from $(V, \theta^a)$ to $(V, \theta^b)$: that is, the set of all $g : V \ra V$ such that $g^* \theta^b = \theta^a$.  

\medbreak 

First, consider the {\it linear} case surrounding Theorem \ref{linear}. When $a \in V$ let  
	\[\tau_a : V \ra V : z \mapsto z + a
\]
denote translation by $a$; a routine calculation establishes the identity 
	\[\tau_a^* \theta = \theta^a. 
\]
In this linear case, if also $b \in V$ then 
	\[{\rm Iso} (V^a, V^b) = \{\tau_b^{-1} \circ \gamma \circ \tau_a : \gamma \in {\rm Sp} (V, \Omega) \}.
\]
This may of course be established by the line of argument used for Theorem \ref{linear}. It also succumbs to the line used for Theorem \ref{higher}: indeed, the self-map $g$ of $V$ satisfies $g^* \theta^b = \theta^a$ precisely when $g^* \tau_b^* \theta = \tau_a^* \theta$ precisely when $(\tau_b \circ g \circ \tau_a^{-1})^* \theta = \theta$ precisely when $\tau_b \circ g \circ \tau_a^{-1} \in {\rm Sp} (V, \Omega)$ by Theorem \ref{aut}. In particular, $\tau_b^{-1} \circ \tau_a$ is a distinguished isomorphism from $(V, \theta^a)$ to $(V, \theta^b)$ and these Liouville structures are all isomorphic to each other. 

\medbreak 

Secondly, consider the {\it quadratic} case surrounding Theorem \ref{quadratic}. In this case, if $a, b \in V$ then 
	\[{\rm Iso} (V^a, V^b) = \{ \gamma \in {\rm Sp} (V, \Omega) : \gamma (a) = \pm b \}. 
\]
This may be established essentially as was Theorem \ref{quadratic}: start from preservation of the respective time $2 t$ Liouville flows, thus 
\[g(e^t \{ z + t \Omega (a, z) a\}) = e^t \{ g(z) + t \Omega (b, g(z)) b \} ; 
\]
then apply $e^{-t} \frac{\rm d}{{\rm d} t}$ and pass to the limit as $t \ra - \infty$. The action of ${\rm Sp} (V, \Omega)$ on $V\setminus \{ 0 \}$ being transitive, it follows that the (quadratic) Liouville structures $(V, \theta^a)$ with $a$ nonzero are all isomorphic to each other; of course, they are {\it not} isomorphic to the canonical Liouville structure $(V, \theta^0)$. Similar comments apply to the Liouville structures $(V, \theta - {\rm d}\psi^a)$ in this quadratic case, with the further remark that if $a$ is nonzero then $(V, \theta - {\rm d}\psi^a)$ and $(V, \theta + {\rm d}\psi^a)$ are {\it not} isomorphic; an isomorphism between them would lead not to the equation $\lambda^2 = 1$ that arose in the proof of Theorem \ref{quadratic} but rather to the equation $\lambda^2 = -1$ which has no real solution. 

\medbreak 

Lastly, consider the {\it higher} cases surrounding Theorem \ref{higher}. In these cases, if $a, b \in V$ then 
\[{\rm Iso} (V^a, V^b) = \{ f_b \circ \gamma \circ f_a^{-1} : \gamma \in {\rm Sp} (V, \Omega) \} 
\] 
as follows easily by the line of argument for Theorem \ref{higher} itself. In particular, $f_b \circ f_a^{-1}$ is a distinguished isomorphism from $(V, \theta^a)$ to $(V, \theta^b)$ and these higher Liouville structures are all isomorphic to each other.  

\medbreak 

Thus, the homogeneous monomial Liouville structures on $(V, \omega)$ that we have considered fall into two isomorphism classes. The one comprises all the Liouville structures $(V, \theta^a)$ associated to quadratic $\psi^a$ as $a \in V$ runs over the nonzero vectors. The other comprises all the Liouville structures $(V, \theta^a)$ associated to linear, cubic and higher-degree monomials as $a$ runs over the whole of $V$; this class contains the canonical Liouville structure $(V, \theta^0)$ which is associated to $\psi^0 = 0$ in any degree. This list ignores the quadratic structures $\{ (V, \theta - {\rm d}\psi^a) : 0 \neq a \in V \}$ which received passing mention; these constitute a separate isomorphism class.  

\medbreak 

\begin{center} 
{\small A}{\footnotesize CKNOWLEDGEMENT}
\end{center} 

\medbreak 

The author is happy to acknowledge Mike Spivak: his Clever Observation on page 399 of [1] saw service in the proof of Theorem \ref{Omega} and elsewhere. 

\medbreak 

\begin{center} 
{\small R}{\footnotesize EFERENCE}
\end{center} 

\medbreak 

[1] M. Spivak, A Comprehensive Introduction to Differential Geometry, Volume Two. Publish or Perish (1979). 

\end{document}